\newcommand{\abs}[1]{\left|#1\right|}
\newcommand{\bdry}[1]{\partial #1}
\newcommand{\bgset}[1]{\big\{#1\big\}}
\newcommand{\A}{{\cal A}}
\newcommand{\F}{{\cal F}}
\newcommand{\comp}{\circ}
\newcommand{\dist}[2]{\text{dist}\, (#1,#2)}
\newcommand{\ds}[1]{\displaystyle #1}
\newcommand{\eps}{\varepsilon}
\newcommand{\id}[1][]{id_{\, #1}}
\newcommand{\incl}{\subset}
\newcommand{\M}{{\cal M}}
\newcommand{\N}{\mathbb N}
\newcommand{\norm}[2][]{\left\|#2\right\|_{#1}}
\renewcommand{\o}{\text{o}}
\newcommand{\PS}[1]{$(\text{PS})_{#1}$}
\newcommand{\pnorm}[2][]{\if #1'' \left|#2\right|_p \else \left|#2\right|_{#1} \fi}
\newcommand{\QED}{\mbox{\qedhere}}
\newcommand{\R}{\mathbb R}
\newcommand{\RP}{\R \text{P}}
\newcommand{\restr}[2]{\left.#1\right|_{#2}}
\newcommand{\seq}[1]{\left(#1\right)}
\newcommand{\set}[1]{\left\{#1\right\}}
\newcommand{\vol}[1]{\left|#1\right|}
\newcommand{\wto}{\rightharpoonup}
\newcommand{\Z}{\mathbb Z}
\newenvironment{enumroman}{\begin{enumerate}

}{\end{enumerate}}
\newenvironment{properties}[1]{\begin{enumerate}

}{\end{enumerate}}
\newtheorem{corollary}{Corollary}[section]
\newtheorem{lemma}[corollary]{Lemma}
\newtheorem{proposition}[corollary]{Proposition}
\newtheorem{theorem}[corollary]{Theorem}
\theoremstyle{remark}
\newtheorem{remark}[corollary]{Remark}
\numberwithin{equation}{section}
\title{\bf Bifurcation and multiplicity results for critical fractional $p$-Laplacian problems\thanks{{\em MSC2010:} Primary 35R11, 35B33, Secondary 35B32, 58E05
\newline \indent\; {\em Key Words and Phrases:} fractional $p$-Laplacian, critical nonlinearity, bifurcation, multiplicity, existence, abstract critical point theory, $\Z_2$-cohomological index, pseudo-index}}
\author{\bf Kanishka Perera\\
Department of Mathematical Sciences\\
Florida Institute of Technology\\
Melbourne, FL 32901, USA\\
[\bigskipamount]
\bf Marco Squassina\thanks{The second-named author was supported by 2009 MIUR project: ``Variational and Topological Methods in the Study of Nonlinear Phenomena''.}\\
Dipartimento di Informatica\\
Universit\`a degli Studi di Verona\\
37134 Verona, Italy\\
[\bigskipamount]
\bf Yang Yang\thanks{This work was completed while the third-named author was visiting the Department of Mathematical Sciences at the Florida Institute of Technology, and she is grateful for the kind hospitality of the department. Project supported by NSFC-Tian Yuan Special Foundation (No. 11226116), Natural Science Foundation of Jiangsu Province of China for Young Scholars (No. BK2012109), and the China Scholarship Council (No. 201208320435).}\\
School of Science\\
Jiangnan University\\
Wuxi, 214122, China}
\date{}
\begin{document}

\maketitle

\begin{abstract}
We prove a bifurcation and multiplicity result for a critical fractional $p$-Laplacian problem that is the analog of the Br{\'e}zis-Nirenberg problem for the nonlocal quasilinear case. This extends a result in the literature for the semilinear case $p = 2$ to all $p \in (1,\infty)$, in particular, it gives a new existence result. When $p \ne 2$, the nonlinear operator $(- \Delta)_p^s,\, s \in (0,1)$ has no linear eigenspaces, so our extension is nontrivial and requires a new abstract critical point theorem that is not based on linear subspaces. We prove a new abstract result based on a pseudo-index related to the $\Z_2$-cohomological index that is applicable here.
\end{abstract}

\section{Introduction and main results}

For $p \in (1,\infty)$, $s \in (0,1)$, and $N > sp$, the fractional $p$-Laplacian $(- \Delta)_p^s$ is the nonlinear nonlocal operator defined on smooth functions by
\[
(- \Delta)_p^s\, u(x) = 2 \lim_{\eps \searrow 0} \int_{\R^N \setminus B_\eps(x)} \frac{|u(x) - u(y)|^{p-2}\, (u(x) - u(y))}{|x - y|^{N+sp}}\, dy, \quad x \in \R^N.
\]
This definition is consistent, up to a normalization constant depending on $N$ and $s$, with the usual definition of the linear fractional Laplacian operator $(- \Delta)^s$ when $p = 2$. 
 There is currently a rapidly growing literature on problems involving these nonlocal operators. In particular, fractional $p$-eigenvalue problems have been studied in
 Brasco and Parini \cite{eig1}, Brasco, Parini and Squassina \cite{eig2}, Franzina and Palatucci \cite{FrPa},
 Iannizzotto and Squassina \cite{MR3245079} and in Lindgren and Lindqvist \cite{MR3148135}. 
 Regularity of solutions was obtained in Di Castro, Kuusi and  Palatucci \cite{DiKuPa,reg2}, Iannizzotto, Mosconi and  Squassina \cite{reg4},
 Kuusi, Mingione and Sire \cite{reg1} and Lindgren \cite{reg3}.
 Existence via Morse theory was investigated in Iannizzotto et al.\! \cite{IaLiPeSq}. We refer to Caffarelli \cite{Ca} for the motivations that have lead to their study.
Let $\Omega$ be a bounded domain in $\R^N$ with Lipschitz boundary. We consider the problem
\begin{equation} \label{1}
\left\{\begin{aligned}
(- \Delta)_p^s\, u & = \lambda\, |u|^{p-2}\, u + |u|^{p_s^\ast - 2}\, u && \text{in } \Omega\\[10pt]
u & = 0 && \text{in } \R^N \setminus \Omega,
\end{aligned}\right.
\end{equation}
where $p_s^\ast = Np/(N - sp)$ is the fractional critical Sobolev exponent. Let us recall the weak formulation of problem \eqref{1}. Let
\[
[u]_{s,p} = \left(\int_{\R^{2N}} \frac{|u(x) - u(y)|^p}{|x - y|^{N+sp}}\, dx dy\right)^{1/p}
\]
be the Gagliardo seminorm of the measurable function $u : \R^N \to \R$, and let
\[
W^{s,p}(\R^N) = \set{u \in L^p(\R^N) : [u]_{s,p} < \infty}
\]
be the fractional Sobolev space endowed with the norm
\[
\norm[s,p]{u} = \big(\pnorm[p]{u}^p + [u]_{s,p}^p\big)^{1/p},
\]
where $\pnorm[p]{\cdot}$ is the norm in $L^p(\R^N)$ (see Di Nezza et al.\! \cite{MR2944369} for details). We work in the closed linear subspace
\[
X_p^s(\Omega) = \set{u \in W^{s,p}(\R^N) : u = 0 \text{ a.e.\! in } \R^N \setminus \Omega},
\]
equivalently renormed by setting $\norm{\cdot} = [\cdot]_{s,p}$, which is a uniformly convex Banach space. By \cite[Theorems 6.5 \& 7.1]{MR2944369}, the imbedding $X_p^s(\Omega) \hookrightarrow L^r(\Omega)$ is continuous for $r \in [1,p_s^\ast]$ and compact for $r \in [1,p_s^\ast)$. We let
\begin{equation} \label{7}
S_{s,p} = \inf_{u \in X_p^s(\Omega) \setminus \set{0}}\, \frac{\norm{u}^p}{\pnorm[p_s^\ast]{u}^p}
\end{equation}
denote the best imbedding constant when $r = p_s^\ast$. A function $u \in X_p^s(\Omega)$ is a weak solution of problem \eqref{1} if
\begin{multline} \label{10}
\int_{\R^{2N}} \frac{|u(x) - u(y)|^{p-2}\, (u(x) - u(y))\, (v(x) - v(y))}{|x - y|^{N+sp}}\, dx dy\\[5pt]
= \lambda \int_\Omega |u|^{p-2}\, uv\, dx + \int_\Omega |u|^{p_s^\ast - 2}\, uv\, dx \quad \forall v \in X_p^s(\Omega).
\end{multline}

In the semilinear case $p = 2$, problem \eqref{1} reduces to the critical fractional Laplacian problem
\begin{equation} \label{2}
\left\{\begin{aligned}
(- \Delta)^s\, u & = \lambda u + |u|^{2_s^\ast - 2}\, u && \text{in } \Omega\\[10pt]
u & = 0 && \text{in } \R^N \setminus \Omega,
\end{aligned}\right.
\end{equation}
where $2_s^\ast = 2N/(N - 2s)$. This nonlocal problem generalizes the well-known Br{\'e}zis\nobreakdash-Nirenberg problem, which has been extensively studied beginning with the seminal paper \cite{MR709644} (see, e.g., \cite{MR779872,MR831041,MR829403,MR1009077,MR1083144,MR1154480,MR1306583,MR1473856,MR1441856,MR1491613,MR1695021,MR1784441,MR1961520} and references therein). Consequently, many results known in the local case $s = 1$ have been extended to problem \eqref{2} (see, e.g., \cite{MR3089742,Se,MR3060890,SeVa1,MR2911424,MR2819627,FiscellaBisciServadei}). In particular, Fiscella et al.\! \cite{FiscellaBisciServadei} have recently obtained the following bifurcation and multiplicity result, extending a well-known result of Cerami et al.\! \cite{MR779872} in the local case. Let $0 < \lambda_1 < \lambda_2 \le \lambda_3 \le \cdots \to + \infty$ be the eigenvalues of the problem
\[
\left\{\begin{aligned}
(- \Delta)^s\, u & = \lambda u && \text{in } \Omega\\[10pt]
u & = 0 && \text{in } \R^N \setminus \Omega,
\end{aligned}\right.
\]
repeated according to multiplicity, and let $\vol{\cdot}$ denote the Lebesgue measure in $\R^N$. If $\lambda_k \le \lambda < \lambda_{k+1}$ and
\[
\lambda > \lambda_{k+1} - \frac{S_{s,2}}{\vol{\Omega}^{2s/N}},
\]
and $m$ denotes the multiplicity of $\lambda_{k+1}$, then problem \eqref{2} has $m$ distinct pairs of nontrivial solutions $\pm\, u^\lambda_j,\, j = 1,\dots,m$ such that $u^\lambda_j \to 0$ as $\lambda \nearrow \lambda_{k+1}$ (see \cite[Theorem 1]{FiscellaBisciServadei}).

In the present paper we extend the above bifurcation and multiplicity result to the quasilinear nonlocal problem \eqref{1}. This extension is quite nontrivial. Indeed, the linking argument based on eigenspaces of $(- \Delta)^s$ in \cite{FiscellaBisciServadei} does not work when $p \ne 2$ since the nonlinear operator $(- \Delta)_p^s$ does not have linear eigenspaces. We will use a more general construction based on sublevel sets as in Perera and Szulkin \cite{MR2153141} (see also Perera et al.\! \cite[Proposition 3.23]{MR2640827}). Moreover, the standard sequence of variational eigenvalues of $(- \Delta)_p^s$ based on the genus does not give enough information about the structure of the sublevel sets to carry out this linking construction. Therefore we will use a different sequence of eigenvalues introduced in Iannizzotto et al.\! \cite{IaLiPeSq} that is based on the $\Z_2$-cohomological index of Fadell and Rabinowitz \cite{MR57:17677}, which is defined as follows. Let $W$ be a Banach space and let $\A$ denote the class of symmetric subsets of $W \setminus \set{0}$. For $A \in \A$, let $\overline{A} = A/\Z_2$ be the quotient space of $A$ with each $u$ and $-u$ identified, let $f : \overline{A} \to \RP^\infty$ be the classifying map of $\overline{A}$, and let $f^\ast : H^\ast(\RP^\infty) \to H^\ast(\overline{A})$ be the induced homomorphism of the Alexander-Spanier cohomology rings. The cohomological index of $A$ is defined by
\[
i(A) = \begin{cases}
\sup \set{m \ge 1 : f^\ast(\omega^{m-1}) \ne 0}, & A \ne \emptyset\\[5pt]
0, & A = \emptyset,
\end{cases}
\]
where $\omega \in H^1(\RP^\infty)$ is the generator of the polynomial ring $H^\ast(\RP^\infty) = \Z_2[\omega]$. For example, the classifying map of the unit sphere $S^{m-1}$ in $\R^m,\, m \ge 1$ is the inclusion $\RP^{m-1} \incl \RP^\infty$, which induces isomorphisms on $H^q$ for $q \le m - 1$, so $i(S^{m-1}) = m$.

The Dirichlet spectrum of $(- \Delta)_p^s$ in $\Omega$ consists of those $\lambda \in \R$ for which the problem
\begin{equation} \label{3}
\left\{\begin{aligned}
(- \Delta)_p^s\, u & = \lambda\, |u|^{p-2}\, u && \text{in } \Omega\\[10pt]
u & = 0 && \text{in } \R^N \setminus \Omega
\end{aligned}\right.
\end{equation}
has a nontrivial solution. Although a complete description of the spectrum is not known when $p \ne 2$, we can define an increasing and unbounded sequence of variational eigenvalues via a suitable minimax scheme. The standard scheme based on the genus does not give the index information necessary for our purposes here, so we will use the following scheme based on the cohomological index as in Iannizzotto et al.\! \cite{IaLiPeSq} (see also Perera \cite{MR1998432}). Let
\[
\Psi(u) = \frac{1}{\pnorm[p]{u}^p}, \quad u \in \M = \set{u \in X_p^s(\Omega) : \norm{u} = 1}.
\]
Then eigenvalues of problem \eqref{3} coincide with critical values of $\Psi$. We use the standard notation
\[
\Psi^a = \set{u \in \M : \Psi(u) \le a}, \quad \Psi_a = \set{u \in \M : \Psi(u) \ge a}, \quad a \in \R
\]
for the sublevel sets and superlevel sets, respectively. Let $\F$ denote the class of symmetric subsets of $\M$, and set
\[
\lambda_k := \inf_{M \in \F,\; i(M) \ge k}\, \sup_{u \in M}\, \Psi(u), \quad k \in \N.
\]
Then $0 < \lambda_1 < \lambda_2 \le \lambda_3 \le \cdots \to + \infty$ is a sequence of eigenvalues of problem \eqref{3}, and
\begin{equation} \label{4}
\lambda_k < \lambda_{k+1} \implies i(\Psi^{\lambda_k}) = i(\M \setminus \Psi_{\lambda_{k+1}}) = k
\end{equation}
(see Iannizzotto et al.\! \cite[Proposition 2.4]{IaLiPeSq}). The asymptotic behavior of these eigenvalues was recently studied in Iannizzotto and Squassina \cite{MR3245079}. Making essential use of the index information in \eqref{4}, we will prove the following theorem.

\begin{theorem} \label{Theorem 1}
\begin{enumroman}
\item \label{Theorem 1.i} If
\[
\lambda_1 - \frac{S_{s,p}}{\vol{\Omega}^{sp/N}} < \lambda < \lambda_1,
\]
then problem \eqref{1} has a pair of nontrivial solutions $\pm\, u^\lambda$ such that $u^\lambda \to 0$ as $\lambda \nearrow \lambda_1$.

\item \label{Theorem 1.ii} If $\lambda_k \le \lambda < \lambda_{k+1} = \cdots = \lambda_{k+m} < \lambda_{k+m+1}$ for some $k, m \in \N$ and
\begin{equation} \label{5}
\lambda > \lambda_{k+1} - \frac{S_{s,p}}{\vol{\Omega}^{sp/N}},
\end{equation}
then problem \eqref{1} has $m$ distinct pairs of nontrivial solutions $\pm\, u^\lambda_j,\, j = 1,\dots,m$ such that $u^\lambda_j \to 0$ as $\lambda \nearrow \lambda_{k+1}$.
\end{enumroman}
\end{theorem}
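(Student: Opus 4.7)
The associated energy functional
\[
I_\lambda(u) = \frac{1}{p}\,\norm{u}^p - \frac{\lambda}{p}\,\pnorm[p]{u}^p - \frac{1}{p_s^\ast}\,\pnorm[p_s^\ast]{u}^{p_s^\ast}
\]
is even, of class $C^1$ on $X_p^s(\Omega)$, vanishes at the origin, and its critical points are exactly the weak solutions of \eqref{1}. My starting point is a local Palais--Smale property: a concentration--compactness analysis adapted to the fractional $p$-Laplacian, in the spirit of P.-L.~Lions' second principle, shows that $I_\lambda$ satisfies \PS{c} for every $c<c^\ast := \frac{s}{N}\,S_{s,p}^{N/(sp)}$, so both parts of Theorem~\ref{Theorem 1} reduce to producing critical levels strictly below $c^\ast$.

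For part $(i)$, the hypothesis $\lambda<\lambda_1$ together with the variational characterization of $\lambda_1$ endows $I_\lambda$ with symmetric mountain pass geometry around the origin. To control the mountain pass level, I would pick a first eigenfunction $v\in\M$ (so $\pnorm[p]{v}^p = 1/\lambda_1$) and optimize along the ray $t\mapsto tv$:
\[
\max_{t\ge 0}\, I_\lambda(tv) = \frac{s}{N}\left(\frac{1-\lambda\,\pnorm[p]{v}^p}{\pnorm[p_s^\ast]{v}^p}\right)^{\!N/(sp)}.
\]
Combining H\"older's inequality $\pnorm[p]{v}^p \le \vol{\Omega}^{sp/N}\,\pnorm[p_s^\ast]{v}^p$ with the assumption $\lambda > \lambda_1 - S_{s,p}/\vol{\Omega}^{sp/N}$ forces this maximum strictly below $c^\ast$, and a $\Z_2$-equivariant mountain pass theorem then delivers the pair $\pm u^\lambda$.

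For part $(ii)$, the linear-eigenspace linking used when $p=2$ is unavailable, so I plan to prove and apply an abstract multiplicity theorem of pseudo-index type based on the $\Z_2$-cohomological index, in the spirit of Perera--Szulkin. Fix $0<r<R$ and set
\[
A_0 := \set{Rv : v\in\Psi^{\lambda_{k+1}}},\; A := \set{tv : v\in\Psi^{\lambda_{k+1}},\, 0\le t\le R},\; B := \set{rv : v\in\Psi_{\lambda_{k+1}}}.
\]
Since $\lambda_{k+m} = \lambda_{k+1} < \lambda_{k+m+1}$, property \eqref{4} gives $i(A_0) = i(\Psi^{\lambda_{k+1}}) = k+m$ and $i(\M\setminus\Psi_{\lambda_{k+1}}) = k$. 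Defining, for symmetric $M\subset A$, the pseudo-index $i^\ast(M) := \min_h i\bigl(h(M)\cap B\bigr)$ over odd continuous deformations $h$ of $X_p^s(\Omega)$ that fix $A_0$, the monotonicity and subadditivity of $i$ force $i^\ast(A) \ge m$, and the minimax levels
\[
c_j^\lambda := \inf\set{\sup_M I_\lambda : M\subset A \text{ symmetric},\ i^\ast(M)\ge j},\qquad j=1,\dots,m,
\]
are well-defined and nondecreasing. Repeating the one-dimensional optimization of the previous paragraph with $v$ ranging over $\Psi^{\lambda_{k+1}}$ (so $\pnorm[p]{v}^p \ge 1/\lambda_{k+1}$), H\"older's inequality and hypothesis \eqref{5} give $\sup_A I_\lambda < c^\ast$; meanwhile, for $u=rv\in B$ one has $\pnorm[p]{v}^p \le 1/\lambda_{k+1}$, hence
\[
I_\lambda(u) \ge \frac{r^p}{p}\bigl(1-\lambda/\lambda_{k+1}\bigr) - C\,r^{p_s^\ast} \ge \delta > 0
\]
for $r$ small. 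Thus $\delta \le c_1^\lambda \le \cdots \le c_m^\lambda < c^\ast$, the standard pseudo-index deformation argument identifies each $c_j^\lambda$ as a critical value, and coincidences $c_j^\lambda = c_{j+1}^\lambda$ are absorbed in the customary way by showing that the corresponding critical set has cohomological index at least $2$, producing in total $m$ distinct pairs $\pm u_j^\lambda$.

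The bifurcation conclusion is immediate from the upper bound: as $\lambda\nearrow\lambda_{k+1}$, $(1-\lambda\,\pnorm[p]{v}^p)_+ \to 0$ uniformly on $\Psi^{\lambda_{k+1}}$, so $c_j^\lambda \to 0$, and coupling $I_\lambda(u_j^\lambda) \to 0$ with the critical-point equation $\langle I_\lambda'(u_j^\lambda),u_j^\lambda\rangle = 0$ forces $\norm{u_j^\lambda} \to 0$. The main obstacle I foresee is the third paragraph: engineering the abstract pseudo-index theorem so that it (a) operates purely on symmetric sublevel and superlevel sets of $\Psi$ rather than on linear subspaces, (b) translates the index data $i(A_0) = k+m$ and $i(\M\setminus\Psi_{\lambda_{k+1}}) = k$ into the clean intersection inequality $i^\ast(A) \ge m$, and (c) counts critical points correctly through the multiplicative properties of $i$; the \PS{c} analysis below $c^\ast$ for the nonlocal operator $(-\Delta)_p^s$, based on a fractional version of concentration--compactness, is the expected secondary hurdle.
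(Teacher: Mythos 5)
Your overall strategy matches the paper's: a local \PS{c} condition below $\tfrac{s}{N}S_{s,p}^{N/(sp)}$, an abstract pseudo-index theorem built on the $\Z_2$-cohomological index rather than linear subspaces, and sharp one-parameter estimates using H\"older's inequality and hypothesis \eqref{5} to pin the minimax levels below the compactness threshold. The paper proves \PS{c} by a direct Br\'ezis--Lieb splitting of the Gagliardo seminorm (Lemma~\ref{Lemma 5}) rather than the full concentration--compactness machinery you invoke; that is a minor stylistic difference and both routes lead to the same conclusion.

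There is, however, a genuine gap in your construction of the linking sets. You take $A_0=\{Rv:v\in\Psi^{\lambda_{k+1}}\}$, a scaled copy of the full sublevel set $\Psi^{\lambda_{k+1}}\subset\M$. In infinite dimensions this set is closed but \emph{not compact} (it is a subset of the unit sphere of $X_p^s(\Omega)$ with $\pnorm[p]{v}^p\ge 1/\lambda_{k+1}$, which does not force precompactness). The abstract pseudo-index argument you sketch requires $A_0$ to be compact: the piercing property $(i_7)$ applied to $\varphi(u,t)=\gamma(tu)$ needs $\varphi(A\times[0,1])$ closed, which fails without compactness; and the minimax must run over \emph{compact} symmetric sets for the standard deformation argument to produce critical points. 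Your claim that ``monotonicity and subadditivity force $i^\ast(A)\ge m$'' also elides the real mechanism, which is the piercing property yielding $i^\ast(X)\ge k+m$, not $\ge m$. The paper addresses the compactness issue head-on in Proposition~\ref{Proposition 7}: using the strict monotonicity of the operator $A_p^s$, the continuity of the solution map $J:L^p(\Omega)\to X_p^s(\Omega)$ (Lemma~\ref{Lemma 6}), and the compactness of the embedding $X_p^s(\Omega)\hookrightarrow L^p(\Omega)$, it constructs a \emph{compact} symmetric subset $A_0\subset\Psi^{\lambda_{k+m}}$ with $i(A_0)=k+m$; only then does Theorem~\ref{Theorem 3} apply. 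Without an analogue of that proposition -- or a substitute for the compactness hypothesis -- your argument does not close. You do flag in your final paragraph that building the abstract pseudo-index machinery is the chief obstacle, which is the right instinct, but producing the compact $A_0$ of correct index is the specific missing ingredient.
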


In particular, we have the following existence result.

\begin{corollary}
Problem \eqref{1} has a nontrivial solution for all $\lambda \in \ds{\bigcup_{k=1}^\infty} \big(\lambda_k - S_{s,p}/\vol{\Omega}^{sp/N},\lambda_k\big)$.
\end{corollary}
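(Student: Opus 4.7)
The plan is to deduce this corollary directly from Theorem~\ref{Theorem 1} by selecting, for the given $\lambda$, the smallest variational eigenvalue that strictly exceeds it, and then verifying that this eigenvalue satisfies the hypotheses of one of the two parts of the theorem.

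More precisely, I would fix $K \ge 1$ and $\lambda \in \big(\lambda_K - S_{s,p}/\vol{\Omega}^{sp/N},\, \lambda_K\big)$, and then set
\[
k_0 := \min \set{j \in \N : \lambda < \lambda_j}.
\]
This minimum is well defined because the eigenvalue sequence is unbounded, and by construction $k_0 \le K$. The monotonicity of the sequence $(\lambda_j)$ then yields $\lambda_{k_0} \le \lambda_K$, hence
\[
\lambda > \lambda_K - \frac{S_{s,p}}{\vol{\Omega}^{sp/N}} \ge \lambda_{k_0} - \frac{S_{s,p}}{\vol{\Omega}^{sp/N}},
\]
which is exactly the lower bound Theorem~\ref{Theorem 1} requires at index $k_0$.

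It then only remains to split into two cases. If $k_0 = 1$, then $\lambda_1 - S_{s,p}/\vol{\Omega}^{sp/N} < \lambda < \lambda_1$ by definition of $k_0$, and part~\ref{Theorem 1.i} of the theorem supplies a pair of nontrivial solutions $\pm u^\lambda$. If $k_0 \ge 2$, I would set $k := k_0 - 1$ and let $m \ge 1$ be the multiplicity of $\lambda_{k_0}$, so that $\lambda_k \le \lambda < \lambda_{k+1} = \cdots = \lambda_{k+m} < \lambda_{k+m+1}$ by the defining property of $k_0$, while \eqref{5} is precisely the displayed inequality above. Part~\ref{Theorem 1.ii} then produces at least one (in fact $m$) pair(s) of nontrivial solutions, which again proves the corollary.

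There is no real analytical obstacle here: all of the heavy lifting (the variational setup, the Palais--Smale analysis at the critical level, and the pseudo-index linking via the $\Z_2$-cohomological index) is already bundled into Theorem~\ref{Theorem 1}. The only item requiring any care is that the interval index $K$ and the index $k_0$ need not coincide; passing to the smallest eigenvalue that strictly exceeds $\lambda$ is what ensures the chain of intervals in the union covers every admissible $\lambda$ and feeds into Theorem~\ref{Theorem 1} with the correct hypothesis.
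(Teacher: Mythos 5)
Your proof is correct and is essentially the direct deduction that the paper leaves implicit (the paper states the corollary without proof, treating it as an immediate consequence of Theorem~\ref{Theorem 1}). The choice of $k_0$ as the smallest index with $\lambda < \lambda_{k_0}$, the monotonicity argument showing $\lambda > \lambda_{k_0} - S_{s,p}/\vol{\Omega}^{sp/N}$, and the case split between $k_0 = 1$ (part~\ref{Theorem 1.i}) and $k_0 \ge 2$ with $k = k_0 - 1$ and $m$ the multiplicity of $\lambda_{k_0}$ (part~\ref{Theorem 1.ii}) are exactly what is needed; in the second case $\lambda_{k_0 - 1} \le \lambda < \lambda_{k_0}$ follows from minimality of $k_0$, which also guarantees $\lambda_k < \lambda_{k+1}$ as required.
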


We note that $\lambda_1 \ge S_{s,p}/\vol{\Omega}^{sp/N}$. Indeed, if $\varphi_1$ is an eigenfunction associated with $\lambda_1$,
\[
\lambda_1 = \frac{\norm{\varphi_1}^p}{\pnorm[p]{\varphi_1}^p} \ge \frac{S_{s,p} \pnorm[p_s^\ast]{\varphi_1}^p}{\pnorm[p]{\varphi_1}^p} \ge \frac{S_{s,p}}{\vol{\Omega}^{sp/N}}
\]
by the H\"{o}lder inequality.

\begin{remark}
Analogous results for the corresponding local problem driven by the $p$-Laplacian operator were recently obtained in Perera
et al.\ \cite{PeSqYa1}.\ In this work new difficulties need to be handled due to the non-locality of the problem, in particular the detection
of a (lower) range of validity of the Palais-Smale condition, consistent with the one known in the local case $s=1$, see Proposition~\ref{Proposition 4}.
\end{remark}

\section{An abstract critical point theorem}

The abstract result of Bartolo et al. \cite{MR713209} used in Cerami et al.\! \cite{MR779872} and Fiscella et al.\! \cite{FiscellaBisciServadei} is based on linear subspaces and therefore cannot be used to prove our Theorem \ref{Theorem 1}. In this section we prove a more general critical point theorem based on a pseudo-index related to the cohomological index that is applicable here (see also Perera et al.\! \cite[Proposition 3.44]{MR2640827}).

Let $W$ be a Banach space and let $\A$ denote the class of symmetric subsets of $W \setminus \set{0}$. The following proposition summarizes the basic properties of the cohomological index.

\begin{proposition}[Fadell-Rabinowitz \cite{MR57:17677}] \label{Proposition 2}
The index $i : \A \to \N \cup \set{0,\infty}$ has the following properties:
\begin{properties}{i}
\item Definiteness: $i(A) = 0$ if and only if $A = \emptyset$;
\item \label{i2} Monotonicity: If there is an odd continuous map from $A$ to $B$ (in particular, if $A \subset B$), then $i(A) \le i(B)$. Thus, equality holds when the map is an odd homeomorphism;
\item Dimension: $i(A) \le \dim W$;
\item Continuity: If $A$ is closed, then there is a closed neighborhood $N \in \A$ of $A$ such that $i(N) = i(A)$. When $A$ is compact, $N$ may be chosen to be a $\delta$-neighborhood $N_\delta(A) = \set{u \in W : \dist{u}{A} \le \delta}$;
\item Subadditivity: If $A$ and $B$ are closed, then $i(A \cup B) \le i(A) + i(B)$;
\item \label{i6} Stability: If $SA$ is the suspension of $A \ne \emptyset$, obtained as the quotient space of $A \times [-1,1]$ with $A \times \set{1}$ and $A \times \set{-1}$ collapsed to different points, then $i(SA) = i(A) + 1$;
\item \label{i7} Piercing property: If $A$, $A_0$ and $A_1$ are closed, and $\varphi : A \times [0,1] \to A_0 \cup A_1$ is a continuous map such that $\varphi(-u,t) = - \varphi(u,t)$ for all $(u,t) \in A \times [0,1]$, $\varphi(A \times [0,1])$ is closed, $\varphi(A \times \set{0}) \subset A_0$ and $\varphi(A \times \set{1}) \subset A_1$, then $i(\varphi(A \times [0,1]) \cap A_0 \cap A_1) \ge i(A)$;
\item Neighborhood of zero: If $U$ is a bounded closed symmetric neighborhood of $0$, then $i(\bdry{U}) = \dim W$.
\end{properties}
\end{proposition}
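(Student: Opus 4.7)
My plan is to verify the eight properties directly from the definition, using the classifying-map formalism and the naturality of Alexander--Spanier cohomology with $\Z_2$ coefficients. For each nonempty $A \in \A$, the free antipodal action on $A$ produces a classifying map $f_A : \overline{A} \to \RP^\infty$, unique up to homotopy, and hence a ring homomorphism $f_A^\ast : \Z_2[\omega] \to H^\ast(\overline{A})$. Each geometric property is then translated into a statement about these homomorphisms.

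The formal properties $(i_1)$, $(i_2)$, $(i_3)$, and $(i_8)$ come first. Definiteness is immediate from $f_A^\ast(1) = 1$ whenever $A \ne \emptyset$. For monotonicity, an odd continuous $g : A \to B$ descends to $\overline{g} : \overline{A} \to \overline{B}$, and uniqueness of classifying maps up to homotopy gives $f_B \comp \overline{g} \simeq f_A$, so $f_B^\ast(\omega^{m-1}) = 0$ forces $f_A^\ast(\omega^{m-1}) = 0$ and thus $i(A) \le i(B)$. For the neighborhood of zero, the radial projection $\bdry{U} \to S$ onto the unit sphere of $W$ is an odd homeomorphism, $\overline{S} = \RP(W)$, and the canonical inclusion $\RP(W) \hookrightarrow \RP^\infty$ induces isomorphisms on $H^q$ up to degree $\dim W - 1$, yielding $i(\bdry{U}) = \dim W$. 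Dimension follows either trivially (when $\dim W = \infty$) or from monotonicity applied to the radial projection into a finite-dimensional unit sphere.

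The topological properties $(i_4)$, $(i_5)$, and $(i_6)$ are consequences of standard sheaf-theoretic machinery. Continuity uses the tautness of closed pairs in Alexander--Spanier cohomology: there exist arbitrarily small closed symmetric neighborhoods $N$ of $A$ with $H^\ast(\overline{N}) \to H^\ast(\overline{A})$ an isomorphism in sufficiently low degree, so $i(N) = i(A)$; a $\delta$-neighborhood suffices when $A$ is compact. Subadditivity follows from the Mayer--Vietoris sequence applied to $\overline{A \cup B} = \overline{A} \cup \overline{B}$ together with multiplicativity of the cup product: if $f_A^\ast(\omega^{i(A)}) = 0$ and $f_B^\ast(\omega^{i(B)}) = 0$, then $\omega^{i(A)+i(B)}$ lifts to a product of relative classes and restricts to zero on $\overline{A \cup B}$. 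Stability comes from the cofiber sequence for $\overline{SA}$, in which cup product with $\omega$ realizes the connecting homomorphism, so that the first nonzero power of $\omega$ is shifted by one degree.

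The main obstacle is the piercing property $(i_7)$, which requires a more delicate cohomological argument. I would set $K = \varphi(A \times [0,1])$, $K_j = K \cap A_j$, and $K_{01} = K_0 \cap K_1$, all closed and symmetric by hypothesis, and observe that $\varphi$ descends to an odd continuous map $\overline{\varphi} : \overline{A} \times [0,1] \to \overline{K}$. A Mayer--Vietoris argument on the cover $\overline{K} = \overline{K_0} \cup \overline{K_1}$, combined with the fact that $\overline{\varphi}$ simultaneously ``pierces'' $\overline{K_0}$ and $\overline{K_1}$ from the two sides of $[0,1]$, allows one to transfer a nonzero class from $\omega^{i(A)-1}$ down to $\overline{K_{01}}$ via the connecting homomorphism, yielding $i(K_{01}) \ge i(A)$. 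Since the proposition is due to Fadell and Rabinowitz, my written account would follow the outline above and defer the detailed cohomological bookkeeping to \cite{MR57:17677}.
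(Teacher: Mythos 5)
The paper does not prove this proposition at all: it is stated as a literature summary attributed to Fadell--Rabinowitz, and the reader is simply referred to \cite{MR57:17677}. So there is no ``paper's own proof'' to compare against, and any sketch you write is, by construction, doing more than the authors do. With that caveat in place, your outline is broadly the standard one. Properties $(i_1)$--$(i_4)$ and $(i_8)$ are handled correctly via naturality and uniqueness up to homotopy of classifying maps, tautness of Alexander--Spanier cohomology on closed pairs, and the identification of $\overline{\partial U}$ (after radial projection) with $\RP(W)$. Two of your sketches are imprecise, though. For subadditivity $(i_5)$, the statement that ``$\omega^{i(A)+i(B)}$ lifts to a product of relative classes and restricts to zero on $\overline{A\cup B}$'' conflates where the classes live: the usual argument is that $f_{A\cup B}^\ast(\omega^{i(A)})$ restricts to zero on $\overline{A}$ and $f_{A\cup B}^\ast(\omega^{i(B)})$ restricts to zero on $\overline{B}$, hence each lifts (via the long exact sequences of the closed pairs $(\overline{A\cup B},\overline{A})$ and $(\overline{A\cup B},\overline{B})$) to a relative class, and the cup product of those lifts lands in $H^\ast(\overline{A\cup B},\overline{A\cup B}) = 0$, forcing $f_{A\cup B}^\ast(\omega^{i(A)+i(B)}) = 0$. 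For the piercing property $(i_7)$, what you write is not a proof but a paraphrase of the conclusion; the Mayer--Vietoris phrasing you give is far from an argument and does not obviously match the one in \cite{MR57:17677}. You do acknowledge this and defer the bookkeeping to Fadell--Rabinowitz, which ultimately puts you in the same position as the paper: citing the original source for the substance. So: no gap in the context of this paper, but your $(i_5)$ and $(i_7)$ sketches would need to be tightened before they could stand as proofs.
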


Let $\Phi$ be an even $C^1$-functional defined on $W$, and recall that $\Phi$ satisfies the Palais\nobreakdash-Smale compactness condition at the level $c \in \R$, or \PS{c} for short, if every sequence $\seq{u_j} \subset W$ such that $\Phi(u_j) \to c$ and $\Phi'(u_j) \to 0$ has a convergent subsequence. Let $\A^\ast$ denote the class of symmetric subsets of $W$, let $r > 0$, let $S_r = \set{u \in W : \norm{u} = r}$, let $0 < b \le + \infty$, and let $\Gamma$ denote the group of odd homeomorphisms of $W$ that are the identity outside $\Phi^{-1}(0,b)$. The pseudo-index of $M \in \A^\ast$ related to $i$, $S_r$ and $\Gamma$ is defined by
\[
i^\ast(M) = \min_{\gamma \in \Gamma}\, i(\gamma(M) \cap S_r)
\]
(see Benci \cite{MR84c:58014}). The following critical point theorem generalizes Bartolo et al. \cite[Theorem 2.4]{MR713209}.

\begin{theorem} \label{Theorem 3}
Let $A_0,\, B_0$ be symmetric subsets of $S_1$ such that $A_0$ is compact, $B_0$ is closed, and
\[
i(A_0) \ge k + m, \qquad i(S_1 \setminus B_0) \le k
\]
for some integers $k \ge 0$ and $m \ge 1$. Assume that there exists $R > r$ such that
\[
\sup \Phi(A) \le 0 < \inf \Phi(B), \qquad \sup \Phi(X) < b,
\]
where $A = \set{Ru : u \in A_0}$, $B = \set{ru : u \in B_0}$, and $X = \set{tu : u \in A,\, 0 \le t \le 1}$. For $j = k + 1,\dots,k + m$, let
\[
\A_j^\ast = \set{M \in \A^\ast : M \text{ is compact and } i^\ast(M) \ge j}
\]
and set
\[
c_j^\ast := \inf_{M \in \A_j^\ast}\, \max_{u \in M}\, \Phi(u).
\]
Then
\[
\inf \Phi(B) \le c_{k+1}^\ast \le \dotsb \le c_{k+m}^\ast \le \sup \Phi(X),
\]
in particular, $0 < c_j^\ast < b$. If, in addition, $\Phi$ satisfies the {\em \PS{c}} condition for all $c \in (0,b)$, then each $c_j^\ast$ is a critical value of $\Phi$ and there are $m$ distinct pairs of associated critical points.
\end{theorem}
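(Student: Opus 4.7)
The plan is to adapt the classical minimax-with-pseudo-index scheme of Benci~\cite{MR84c:58014} and Bartolo-Benci-Fortunato~\cite{MR713209}, but with the linear-subspace decomposition replaced by the cohomological-index hypotheses on $A_0$ and $B_0$. The core step is to verify $X \in \A_{k+m}^\ast$; this yields $\A_j^\ast \ne \emptyset$ for $j = k+1,\dots,k+m$, the upper bound $c_{k+m}^\ast \le \sup \Phi(X)$, and, via the nested inclusions $\A_{k+m}^\ast \subset \cdots \subset \A_{k+1}^\ast$, the monotone chain $c_{k+1}^\ast \le \cdots \le c_{k+m}^\ast$. Since $X$ is the continuous image of $A_0 \times [0,1]$ under $(u,t) \mapsto tRu$, it is compact and symmetric, so only $i^\ast(X) \ge k+m$ requires work.

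To obtain this, I would fix $\gamma \in \Gamma$ and use compactness of $A_0$ together with continuity of $\gamma$ at $0$ to select $\rho \in (0,R)$ with $\|\gamma(\rho u)\| < r$ for every $u \in A_0$. The map
\[
\varphi : A_0 \times [0,1] \to W \setminus \{0\}, \qquad \varphi(u,t) = \gamma\bigl(((1-t)\rho + tR)\, u\bigr),
\]
is then continuous, odd in $u$, and has image in $\gamma(X) \setminus \{0\}$. At $t=0$ its values have norm less than $r$, while at $t=1$ one has $\varphi(u,1) = \gamma(Ru) = Ru$, since $\Phi(Ru) \le 0$ places $Ru$ outside $\Phi^{-1}(0,b)$ where $\gamma$ is the identity, so $\|\varphi(u,1)\| = R > r$. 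Applying the piercing property $(i_7)$ to the closed symmetric sets $\{u \ne 0 : \|u\| \le r\}$ and $\{u \ne 0 : \|u\| \ge r\}$ of $W \setminus \{0\}$ yields $i(\gamma(X) \cap S_r) \ge i(\varphi(A_0 \times [0,1]) \cap S_r) \ge i(A_0) \ge k+m$, and minimising over $\gamma$ gives $i^\ast(X) \ge k+m$. For the lower bound on $c_{k+1}^\ast$, any $M \in \A_{k+1}^\ast$ satisfies $i(M \cap S_r) \ge k+1$ (take $\gamma = \id$), while the radial homeomorphism $u \mapsto u/r$ and monotonicity $(i_2)$ give $i(S_r \setminus B) \le k$; hence $M \cap B \ne \emptyset$ and $\max_M \Phi \ge \inf_B \Phi$.

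Under the \PS{c} condition on $(0,b)$, each $c_j^\ast$ must be critical: otherwise a deformation $\eta \in \Gamma$ compressing $\Phi^{c+\eps}$ into $\Phi^{c-\eps}$ would preserve $i^\ast$ (because $\Gamma$ is closed under composition) and produce a smaller minimax value. If $c = c_{j+1}^\ast = \cdots = c_{j+l}^\ast$ and the compact critical set $K_c$ had $i(K_c) \le l-1$, continuity $(i_4)$ would supply a closed symmetric neighbourhood $N$ of $K_c$ with $i(N) \le l-1$; deforming $\Phi^{c+\eps} \setminus N$ into $\Phi^{c-\eps}$, together with the subadditivity bound $i^\ast(\overline{M \setminus N}) \ge i^\ast(M) - i(N) \ge j+1$ for an almost-extremal $M \in \A_{j+l}^\ast$, would again contradict the definition of $c_{j+1}^\ast$, and so $K_c$ must contain at least $l$ pairs $\pm u$. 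The principal difficulty throughout is the construction of $\varphi$: it must land in $\gamma(X) \setminus \{0\}$ (not merely in $W \setminus \{0\}$) so that the piercing output really estimates $i(\gamma(X) \cap S_r)$, and its endpoints must simultaneously exploit the smallness of $\gamma$ near $0$ and the identity action of $\gamma$ on $A$.
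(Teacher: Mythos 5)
Your proof follows essentially the same route as the paper: establish the lower bound $c_{k+1}^\ast \ge \inf\Phi(B)$ by showing every $M \in \A_{k+1}^\ast$ must meet $B$ (via $i(M \cap S_r) \ge k+1 > k \ge i(S_r \setminus B)$), and establish $c_{k+m}^\ast \le \sup\Phi(X)$ by proving $i^\ast(X) \ge k+m$ through the piercing property applied to a radial trace $\varphi(u,t) = \gamma(tu)$ joined from near the origin out to $A$ (where $\gamma$ is the identity because $\Phi \le 0$ there). The only substantive deviation is in the construction of $\varphi$: the paper takes $\varphi(u,t) = \gamma(tu)$ on $A \times [0,1]$, allowing $\varphi(\cdot,0) = \gamma(0) = 0$, and applies $(i_7)$ with the closed ball $\widetilde{A}_0 = \set{\|u\| \le r}$ and its closed complement $\widetilde{A}_1 = \set{\|u\| \ge r}$; you instead offset the inner endpoint to $\rho > 0$ so that $\varphi$ lands in $W\setminus\set{0}$. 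The offset is a technical safeguard you do not actually need under the paper's formulation of $(i_7)$, which only requires the domain $A$ and the output intersection $\varphi(A\times[0,1]) \cap \widetilde{A}_0 \cap \widetilde{A}_1 \subset S_r$ to avoid the origin, not $\varphi$ itself; but your version is also valid, since $\varphi(A_0 \times [0,1]) \subset \gamma(X)$ still yields $i(\gamma(X) \cap S_r) \ge i(A_0)$ by monotonicity. (One small simplification: you don't need compactness of $A_0$ to pick $\rho$; continuity of $\gamma$ at $0$ and the fact that $\gamma(0)=0$ already give a radius $\delta$ with $\|\gamma(v)\| < r$ whenever $\|v\| < \delta$, uniformly over direction.) Your closing sketch of criticality and multiplicity via deformations in $\Gamma$ and the continuity/subadditivity properties of the index is the standard argument the paper elides with ``the rest follows from standard arguments,'' and is correct.
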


\begin{proof}
If $M \in \A_{k+1}^\ast$,
\[
i(S_r \setminus B) = i(S_1 \setminus B_0) \le k < k + 1 \le i^\ast(M) \le i(M \cap S_r)
\]
since $\id[W] \in \Gamma$. Hence $M$ intersects $B$ by \ref{i2} of Proposition \ref{Proposition 2}. It follows that $c_{k+1}^\ast \ge \inf \Phi(B)$. If $\gamma \in \Gamma$, consider the continuous map
\[
\varphi : A \times [0,1] \to W, \quad \varphi(u,t) = \gamma(tu).
\]
We have $\varphi(A \times [0,1]) = \gamma(X)$, which is compact. Since $\gamma$ is odd, $\varphi(-u,t) = - \varphi(u,t)$ for all $(u,t) \in A \times [0,1]$ and $\varphi(A \times \set{0}) = \set{\gamma(0)} = \set{0}$. Since $\Phi \le 0$ on $A$, $\restr{\gamma}{A} = \id[A]$ and hence $\varphi(A \times \set{1}) = A$. Applying \ref{i7} with $\widetilde{A}_0 = \set{u \in W : \norm{u} \le r}$ and $\widetilde{A}_1 = \set{u \in W : \norm{u} \ge r}$ gives
\[
i(\gamma(X) \cap S_r) = i(\varphi(A \times [0,1]) \cap \widetilde{A}_0 \cap \widetilde{A}_1) \ge i(A) = i(A_0) \ge k + m.
\]
It follows that $i^\ast(X) \ge k + m$. So $X \in \A_{k+m}^\ast$ and hence $c_{k+m}^\ast \le \sup \Phi(X)$. The rest now follows from standard arguments in critical point theory (see, e.g., Perera et al.\! \cite{MR2640827}).
\end{proof}

\begin{remark}
Constructions similar to the one in the proof of Theorem \ref{Theorem 3} have been used in Fadell and Rabinowitz \cite{MR57:17677} to prove bifurcation results for Hamiltonian systems, and in Perera and Szulkin \cite{MR2153141} to obtain nontrivial solutions of $p$-Laplacian problems with nonlinearities that interact with the spectrum. See also Perera et al.\! \cite[Proposition 3.44]{MR2640827}.
\end{remark}

\section{Proof of Theorem \ref{Theorem 1}}

Weak solutions of problem \eqref{1} coincide with critical points of the $C^1$-functional
\[
I_\lambda(u) = \frac{1}{p} \norm{u}^p - \frac{\lambda}{p} \pnorm[p]{u}^p - \frac{1}{p_s^\ast} \pnorm[p_s^\ast]{u}^{p_s^\ast}, \quad u \in X_p^s(\Omega).
\]
We have the following compactness result, which is well-known in the local case $s = 1$.

\begin{proposition} \label{Proposition 4}
For any $\lambda \in \R$, $I_\lambda$ satisfies the {\em \PS{c}} condition for all $c < \frac{s}{N}\, S_{s,p}^{N/sp}$.
\end{proposition}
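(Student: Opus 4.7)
The plan is to follow the standard Br\'ezis-Nirenberg concentration scheme, adapted to the nonlocal operator. Let $(u_j) \subset X_p^s(\Omega)$ satisfy $I_\lambda(u_j) \to c$ and $I_\lambda'(u_j) \to 0$. First, to establish boundedness, I would combine
\[
I_\lambda(u_j) - \frac{1}{p}\, \langle I_\lambda'(u_j), u_j\rangle = \frac{s}{N}\, \pnorm[p_s^\ast]{u_j}^{p_s^\ast}
\]
to get $\pnorm[p_s^\ast]{u_j}^{p_s^\ast} \le C(1 + \norm{u_j})$. By H\"older on the bounded set $\Omega$, $\pnorm[p]{u_j}^p$ is then controlled by a power of $(1 + \norm{u_j})$ with exponent $p/p_s^\ast < 1$, and inserting this into $\norm{u_j}^p = p\, I_\lambda(u_j) + \lambda\, \pnorm[p]{u_j}^p + (p/p_s^\ast)\, \pnorm[p_s^\ast]{u_j}^{p_s^\ast}$ gives $\norm{u_j} \le C$.

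Passing to a subsequence, $u_j \wto u$ in $X_p^s(\Omega)$, $u_j \to u$ in $L^p(\Omega)$, and $u_j \to u$ a.e.\ by compact embedding. I would next verify that $u$ is a weak solution by passing to the limit in $\langle I_\lambda'(u_j), v\rangle \to 0$ for $v \in X_p^s(\Omega)$: the subcritical term converges by strong $L^p$ convergence; the critical term via the weak convergence $|u_j|^{p_s^\ast - 2}\, u_j \wto |u|^{p_s^\ast - 2}\, u$ in $L^{p_s^\ast/(p_s^\ast - 1)}(\Omega)$, a consequence of a.e.\ convergence and uniformly bounded norms; and the nonlocal principal term via the analogous weak convergence in $L^{p'}(\R^{2N})$ of the reweighted differences, tested against the $L^p(\R^{2N})$ weight coming from $v$.

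Setting $v_j := u_j - u$, a Br\'ezis-Lieb decomposition in both the Gagliardo seminorm (with measure $|x-y|^{-(N+sp)}\, dx\, dy$) and in $L^{p_s^\ast}$ gives $\norm{v_j}^p = \norm{u_j}^p - \norm{u}^p + \o(1)$ and $\pnorm[p_s^\ast]{v_j}^{p_s^\ast} = \pnorm[p_s^\ast]{u_j}^{p_s^\ast} - \pnorm[p_s^\ast]{u}^{p_s^\ast} + \o(1)$. Combined with $\pnorm[p]{v_j} \to 0$, the PS condition and $\langle I_\lambda'(u), u\rangle = 0$ yield $\norm{v_j}^p - \pnorm[p_s^\ast]{v_j}^{p_s^\ast} \to 0$; setting $L := \lim \norm{v_j}^p = \lim \pnorm[p_s^\ast]{v_j}^{p_s^\ast}$, the Sobolev inequality $\norm{v_j}^p \ge S_{s,p}\, \pnorm[p_s^\ast]{v_j}^p$ forces either $L = 0$ or $L \ge S_{s,p}^{N/sp}$.

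Finally, the energy splitting reads $c = I_\lambda(u) + (s/N)\, L$, and $I_\lambda(u) = (s/N)\, \pnorm[p_s^\ast]{u}^{p_s^\ast} \ge 0$ since $u$ is a critical point, so if $L > 0$ then $c \ge (s/N)\, S_{s,p}^{N/sp}$, contradicting the hypothesis. Hence $L = 0$ and $u_j \to u$ strongly in $X_p^s(\Omega)$. The main technical obstacle is the nonlocal leg of the argument: the Br\'ezis-Lieb identity in the Gagliardo seminorm and the limit passage in the nonlinear nonlocal principal term, both of which hinge on a.e.\ convergence of the difference quotients $(u_j(x) - u_j(y))/|x-y|^{(N+sp)/p}$ together with uniform bounds in the weighted Lebesgue space on $\R^{2N}$, which in the nonlocal setting play the role of the a.e.\ gradient-convergence arguments familiar from the local $p$-Laplacian.
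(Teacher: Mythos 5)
Your proposal is correct and follows essentially the same route as the paper: boundedness via $I_\lambda - \frac{1}{p} I_\lambda'\cdot u$; passage to the limit identifying $u$ as a weak solution using a.e.\ convergence of the reweighted difference quotients in $L^{p'}(\R^{2N})$; a Br\'ezis--Lieb splitting in both the Gagliardo seminorm and $L^{p_s^\ast}$ (the paper states the Gagliardo version as a separate Lemma \ref{Lemma 5}, but it is precisely the Br\'ezis--Lieb argument applied to $(u_j(x)-u_j(y))/|x-y|^{(N+sp)/p}$ in $L^p(\R^{2N})$, as you observe); and then $\norm{v_j}^p - \pnorm[p_s^\ast]{v_j}^{p_s^\ast} \to 0$ combined with the Sobolev constant and the energy identity $c = I_\lambda(u) + \frac{s}{N}L$ with $I_\lambda(u)=\frac{s}{N}\pnorm[p_s^\ast]{u}^{p_s^\ast}\ge0$. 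The paper reaches the same conclusion by writing $\norm{\widetilde u_j}^p\bigl(S_{s,p}^{p_s^\ast/p} - \norm{\widetilde u_j}^{p_s^\ast-p}\bigr)\le \o(1)$ and $\limsup\norm{\widetilde u_j}^p \le Nc/s < S_{s,p}^{N/sp}$; your dichotomy formulation ($L=0$ or $L\ge S_{s,p}^{N/sp}$) is a cosmetically different packaging of the same two inequalities.
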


First we prove a lemma.

\begin{lemma} \label{Lemma 5}
If $\seq{u_j}$ is bounded in $X_p^s(\Omega)$ and $u_j \to u$ a.e.\! in $\Omega$, then
\[
\norm{u_j}^p = \norm{u_j - u}^p + \norm{u}^p + \o(1) \quad \text{as } j \to \infty.
\]
\end{lemma}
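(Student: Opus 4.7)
The plan is to recast the Gagliardo seminorm as an $L^p$-norm on a product space and then invoke the classical Brezis-Lieb lemma. Specifically, for each $v \in X_p^s(\Omega)$, define
\[
V(x,y) = \frac{v(x) - v(y)}{|x-y|^{(N+sp)/p}}, \quad (x,y) \in \R^{2N},
\]
so that $\norm{v}^p = [v]_{s,p}^p = \pnorm[p]{V}^p$ (where now $\pnorm[p]{\cdot}$ denotes the $L^p(\R^{2N})$-norm). The question then becomes a clean Brezis-Lieb assertion for the sequence $\seq{U_j}$ in $L^p(\R^{2N})$, where $U_j$ is built from $u_j$ as above and $U$ from the pointwise limit $u$.

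First I would verify that $u$ actually lies in $X_p^s(\Omega)$, which is needed to make $\norm{u}$ meaningful on the right-hand side. Since $u_j = 0$ a.e.\ in $\R^N \setminus \Omega$ for every $j$, the a.e.\ convergence in $\Omega$ extends to a.e.\ convergence in $\R^N$, so $u$ also vanishes a.e.\ outside $\Omega$. By Fubini, $U_j(x,y) \to U(x,y)$ for a.e.\ $(x,y) \in \R^{2N}$, and $\pnorm[p]{U_j}^p = \norm{u_j}^p$ is uniformly bounded by hypothesis. Fatou's lemma then yields
\[
\norm{u}^p = \pnorm[p]{U}^p \le \liminf_{j \to \infty}\, \pnorm[p]{U_j}^p < \infty,
\]
so $u \in X_p^s(\Omega)$.

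Next I would apply the Brezis-Lieb lemma on the measure space $(\R^{2N}, dxdy)$ to the sequence $\seq{U_j}$, which is bounded in $L^p(\R^{2N})$ and converges a.e.\ to $U$. This gives
\[
\pnorm[p]{U_j}^p - \pnorm[p]{U_j - U}^p = \pnorm[p]{U}^p + \o(1).
\]
Translating back via $U_j - U = \widetilde{(u_j - u)}$, i.e.\ the map $v \mapsto V$ is linear, we obtain $\pnorm[p]{U_j - U}^p = \norm{u_j - u}^p$, and the claim follows upon rearranging.

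I do not anticipate any serious obstacle: the only nontrivial points are the passage from a.e.\ convergence in $\Omega$ to a.e.\ convergence of $U_j$ on $\R^{2N}$ (handled by Fubini together with the vanishing of $u_j, u$ outside $\Omega$) and the observation that the map $v \mapsto V$ is linear, so that the Brezis-Lieb conclusion on $L^p(\R^{2N})$ translates directly into the stated decomposition of the seminorm.
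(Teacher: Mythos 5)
Your proof is correct, and it takes a cleaner route than the paper's. The paper re-derives the Brezis--Lieb estimate from scratch on $\R^{2N}$: it introduces $\omega_j(x,y)$ as the absolute value of the discrepancy, invokes the elementary inequality $\big||a+b|^p - |a|^p\big| \le \eps\,|a|^p + C_\eps\,|b|^p$, splits off the dominated part $\omega_j^\eps$, and concludes by dominated convergence and boundedness of $(u_j)$. That inequality is precisely the one underlying the standard proof of the Brezis--Lieb lemma, so the paper is in effect inlining that proof in the product-space setting. You instead observe that the map $v \mapsto V(x,y) = (v(x)-v(y))/|x-y|^{(N+sp)/p}$ is a linear isometry from $(X_p^s(\Omega), \norm{\cdot})$ into $L^p(\R^{2N})$, transfer the hypotheses (boundedness and a.e.\ convergence, the latter since a full-measure convergence set $\R^N \setminus E$ gives the full-measure set $(\R^N\setminus E)\times(\R^N\setminus E)$ in $\R^{2N}$), and quote Brezis--Lieb as a black box. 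What your route buys is brevity and modularity; what the paper's route buys is self-containment, and it is the same approach the authors use again a few lines later when they invoke Brezis--Lieb on $\pnorm[p_s^\ast]{u_j}^{p_s^\ast}$, so the two treatments sit side by side. One minor remark: your preliminary check that $u \in X_p^s(\Omega)$ via Fatou is fine but not strictly needed in the application (there $u$ is already a weak limit in $X_p^s(\Omega)$), and your appeal to ``Fubini'' for the a.e.\ convergence of $U_j$ is really just the fact that $E\times\R^N$ and $\R^N\times E$ are null when $E$ is null; it would be cleaner to say that directly.
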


\begin{proof}
Defining $\omega_j : \R^{2N} \to \R_+$ by
\[
\omega_j(x,y) = \abs{\frac{|u_j(x) - u_j(y)|^p}{|x - y|^{N+sp}} - \frac{|(u_j(x) - u(x)) - (u_j(y) - u(y))|^p}{|x - y|^{N+sp}} - \frac{|u(x) - u(y)|^p}{|x - y|^{N+sp}}},
\]
we will show that
\begin{equation} \label{6}
\lim_{j \to \infty}\, \int_{\R^{2N}} \omega_j(x,y)\, dx dy = 0.
\end{equation}
Given $\eps > 0$, there exists $C_\eps > 0$ such that
\[
\big||a + b|^p - |a|^p\big| \le \eps\, |a|^p + C_\eps\, |b|^p \quad \forall a, b \in \R,
\]
and taking $a = (u_j(x) - u_j(y)) - (u(x) - u(y))$ and $b = u(x) - u(y)$ gives
\[
\omega_j(x,y) \le \eps\, \frac{|(u_j(x) - u_j(y)) - (u(x) - u(y))|^p}{|x - y|^{N+sp}} + C_\eps\, \frac{|u(x) - u(y)|^p}{|x - y|^{N+sp}}.
\]
Consequently, defining $\omega_j^\eps : \R^{2N} \to \R_+$ by
\[
\omega_j^\eps(x,y) = \left(\omega_j(x,y) - \eps\, \frac{|(u_j(x) - u_j(y)) - (u(x) - u(y))|^p}{|x - y|^{N+sp}}\right)^+,
\]
we have
\[
\omega_j^\eps(x,y) \le C_\eps\, \frac{|u(x) - u(y)|^p}{|x - y|^{N+sp}} \in L^1(\R^{2N}).
\]
Since $u_j \to u$ a.e.\! in $\R^N$, $\omega_j^\eps \to 0$ a.e.\! in $\R^{2N}$, so the dominated convergence theorem now implies
\[
\lim_{j \to \infty}\, \int_{\R^{2N}} \omega_j^\eps(x,y)\, dx dy = 0.
\]
Then
\[
\limsup_{j \to \infty}\, \int_{\R^{2N}} \omega_j(x,y)\, dx dy \le \eps\, \limsup_{j \to \infty}\, \int_{\R^{2N}} \frac{|(u_j(x) - u_j(y)) - (u(x) - u(y))|^p}{|x - y|^{N+sp}}\, dx dy,
\]
and \eqref{6} follows since $\eps > 0$ is arbitrary and $u_j$ is bounded in $X_p^s(\Omega)$.
\end{proof}

\begin{proof}[Proof of Proposition \ref{Proposition 4}]
Let $c < \frac{s}{N}\, S_{s,p}^{N/sp}$ and let $\seq{u_j}$ be a sequence in $X_p^s(\Omega)$ such that
\begin{gather}
\label{8} I_\lambda(u_j) = \frac{1}{p} \norm{u_j}^p - \frac{\lambda}{p} \pnorm[p]{u_j}^p - \frac{1}{p_s^\ast} \pnorm[p_s^\ast]{u_j}^{p_s^\ast} = c + \o(1),\\[10pt]
\label{9} \begin{split}
I_\lambda'(u_j)\, v = & \int_{\R^{2N}} \frac{|u_j(x) - u_j(y)|^{p-2}\, (u_j(x) - u_j(y))\, (v(x) - v(y))}{|x - y|^{N+sp}}\, dx dy\\[5pt]
& - \lambda \int_\Omega |u_j|^{p-2}\, u_j\, v\, dx - \int_\Omega |u_j|^{p_s^\ast - 2}\, u_j\, v\, dx = \o(\norm{v}) \quad \forall v \in X_p^s(\Omega)
\end{split}
\end{gather}
as $j \to \infty$. Then
\[
\frac{s}{N} \pnorm[p_s^\ast]{u_j}^{p_s^\ast} = I_\lambda(u_j) - \frac{1}{p}\, I_\lambda'(u_j)\, u_j = \o(\norm{u_j}) + \O(1),
\]
which together with \eqref{8} and the H\"{o}lder inequality shows that $\seq{u_j}$ is bounded in $X_p^s(\Omega)$. So a renamed subsequence of $\seq{u_j}$ converges to some $u$ weakly in $X_p^s(\Omega)$, strongly in $L^r(\Omega)$ for all $r \in [1,p_s^\ast)$, and a.e.\! in $\Omega$ (see Di Nezza et al.\! \cite[Corollary 7.2]{MR2944369}). Denoting by $p' = p/(p - 1)$ the H\"{o}lder conjugate of $p$, $|u_j(x) - u_j(y)|^{p-2}\, (u_j(x) - u_j(y))/|x - y|^{(N+sp)/p'}$ is bounded in $L^{p'}(\R^{2N})$ and converges to $|u(x) - u(y)|^{p-2}\, (u(x) - u(y))/|x - y|^{(N+sp)/p'}$ a.e.\! in $\R^{2N}$, and $(v(x) - v(y))/|x - y|^{(N+sp)/p} \in L^p(\R^{2N})$, so the first integral in \eqref{9} converges to
\[
\int_{\R^{2N}} \frac{|u(x) - u(y)|^{p-2}\, (u(x) - u(y))\, (v(x) - v(y))}{|x - y|^{N+sp}}\, dx dy
\]
for a further subsequence. Moreover,
\[
\int_\Omega |u_j|^{p-2}\, u_j\, v\, dx \to \int_\Omega |u|^{p-2}\, uv\, dx,
\]
and
\[
\int_\Omega |u_j|^{p_s^\ast - 2}\, u_j\, v\, dx \to \int_\Omega |u|^{p_s^\ast - 2}\, uv\, dx
\]
since $|u_j|^{p_s^\ast - 2}\, u_j \wto |u|^{p_s^\ast - 2}\, u$ in $L^{(p_s^\ast)'}(\Omega)$. So passing to the limit in \eqref{9} shows that $u \in X_p^s(\Omega)$ is a weak solution of \eqref{1}, i.e., \eqref{10} holds.

Setting $\widetilde{u}_j = u_j - u$, we will show that $\widetilde{u}_j \to 0$ in $X_p^s(\Omega)$. We have
\begin{equation} \label{11}
\norm{\widetilde{u}_j}^p = \norm{u_j}^p - \norm{u}^p + \o(1)
\end{equation}
by Lemma \ref{Lemma 5}, and
\begin{equation}
\pnorm[p_s^\ast]{\widetilde{u}_j}^{p_s^\ast} = \pnorm[p_s^\ast]{u_j}^{p_s^\ast} - \pnorm[p_s^\ast]{u}^{p_s^\ast} + \o(1)
\end{equation}
by the Br{\'e}zis-Lieb lemma \cite[Theorem 1]{MR699419}. Taking $v = u_j$ in \eqref{9} gives
\begin{equation} \label{14}
\norm{u_j}^p = \lambda \pnorm[p]{u}^p + \pnorm[p_s^\ast]{u_j}^{p_s^\ast} + \o(1)
\end{equation}
since $\seq{u_j}$ is bounded in $X_p^s(\Omega)$ and converges to $u$ in $L^p(\Omega)$, and testing \eqref{10} with $v = u$ gives
\begin{equation} \label{12}
\norm{u}^p = \lambda \pnorm[p]{u}^p + \pnorm[p_s^\ast]{u}^{p_s^\ast}.
\end{equation}
It follows from \eqref{11}--\eqref{12} and \eqref{7} that
\[
\norm{\widetilde{u}_j}^p = \pnorm[p_s^\ast]{\widetilde{u}_j}^{p_s^\ast} + \o(1) \le \frac{\norm{\widetilde{u}_j}^{p_s^\ast}}{S_{s,p}^{p_s^\ast/p}} + \o(1),
\]
so
\begin{equation} \label{13}
\norm{\widetilde{u}_j}^p \big(S_{s,p}^{p_s^\ast/p} - \norm{\widetilde{u}_j}^{p_s^\ast - p}\big) \le \o(1).
\end{equation}
On the other hand,
\[
\begin{aligned}
c & = \frac{1}{p} \norm{u_j}^p - \frac{\lambda}{p} \pnorm[p]{u}^p - \frac{1}{p_s^\ast} \norm[p_s^\ast]{u_j}^{p_s^\ast} + \o(1) && \text{by \eqref{8}}\\[10pt]
& = \frac{s}{N}\, \big(\norm{u_j}^p - \lambda \pnorm[p]{u}^p\big) + \o(1) && \text{by \eqref{14}}\\[10pt]
& = \frac{s}{N}\, \big(\norm{\widetilde{u}_j}^p + \norm{u}^p - \lambda \pnorm[p]{u}^p\big) + \o(1) && \text{by \eqref{11}}\\[10pt]
& = \frac{s}{N}\, \big(\norm{\widetilde{u}_j}^p + \pnorm[p_s^\ast]{u}^{p_s^\ast}\big) + \o(1) && \text{by \eqref{12}}\\[10pt]
& \ge \frac{s}{N} \norm{\widetilde{u}_j}^p + \o(1),
\end{aligned}
\]
so
\begin{equation} \label{15}
\limsup_{j \to \infty}\, \norm{\widetilde{u}_j}^p \le \frac{Nc}{s} < S_{s,p}^{N/sp}.
\end{equation}
It follows from \eqref{13} and \eqref{15} that $\norm{\widetilde{u}_j} \to 0$.
\end{proof}

If $\lambda_{k+m} < \lambda_{k+m+1}$, then $i(\Psi^{\lambda_{k+m}}) = k + m$ by \eqref{4}. In order to apply Theorem \ref{Theorem 3} to the functional $I_\lambda$ to prove Theorem \ref{Theorem 1}, we will construct a compact symmetric subset $A_0$ of $\Psi^{\lambda_{k+m}}$ with the same index. As noted in Iannizzotto et al.\! \cite{IaLiPeSq}, the operator $A_p^s \in C(X_p^s(\Omega),X_p^s(\Omega)^\ast)$, where $X_p^s(\Omega)^\ast$ is the dual of $X_p^s(\Omega)$, defined by
\[
A_p^s(u)\, v = \int_{\R^{2N}} \frac{|u(x) - u(y)|^{p-2}\, (u(x) - u(y))\, (v(x) - v(y))}{|x - y|^{N+sp}}\, dx dy, \quad u, v \in X_p^s(\Omega)
\]
satisfies the structural assumptions of \cite[Chapter 1]{MR2640827}. In particular, $A_p^s$ is of type (S), i.e., every sequence $\seq{u_j} \subset X_p^s(\Omega)$ such that
\[
u_j \wto u, \quad A_p^s(u_j)\, (u_j - u) \to 0
\]
has a subsequence that converges strongly to $u$.

\begin{lemma}
The operator $A_p^s$ is strictly monotone, i.e.,
\[
(A_p^s(u) - A_p^s(v))\, (u - v) > 0
\]
for all $u \ne v$ in $X_p^s(\Omega)$.
\end{lemma}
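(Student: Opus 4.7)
The plan is to reduce the monotonicity of the nonlocal operator $A_p^s$ to the pointwise strict monotonicity of the scalar map $\phi(t) = |t|^{p-2}\,t$ and then exploit the Dirichlet boundary condition (vanishing outside $\Omega$) to upgrade a nonnegative integral to a strictly positive one.

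First I would write
\[
(A_p^s(u) - A_p^s(v))\,(u-v) = \int_{\R^{2N}} \frac{\bigl(\phi(U) - \phi(V)\bigr)\,(U-V)}{|x-y|^{N+sp}}\, dx\,dy,
\]
where $U = U(x,y) := u(x)-u(y)$, $V = V(x,y) := v(x)-v(y)$, and $\phi(t) = |t|^{p-2}\,t$. Since $p > 1$, $\phi$ is the derivative of the strictly convex function $t \mapsto |t|^p/p$ and is in fact strictly increasing on $\R$ (its derivative $(p-1)|t|^{p-2}$ is positive wherever defined, and $\phi(0)=0$). Consequently the pointwise inequality
\[
(\phi(a) - \phi(b))\,(a-b) \ge 0, \quad a,b\in\R,
\]
holds, with strict inequality whenever $a \ne b$. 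Applied with $a = U(x,y)$ and $b = V(x,y)$ this shows that the integrand above is $\ge 0$ almost everywhere, so $(A_p^s(u) - A_p^s(v))(u-v) \ge 0$.

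Next I would prove the strict inequality by contradiction: suppose $u \ne v$ in $X_p^s(\Omega)$ but $(A_p^s(u) - A_p^s(v))(u-v) = 0$. Then the nonnegative integrand vanishes a.e.\ on $\R^{2N}$, and by the strict monotonicity of $\phi$ this forces $U(x,y) = V(x,y)$ for a.e.\ $(x,y)\in\R^{2N}$, i.e.\
\[
(u-v)(x) - (u-v)(y) = 0 \quad \text{for a.e.\ } (x,y) \in \R^{2N}.
\]
A standard Fubini argument then gives that $u-v$ equals some constant $c$ a.e.\ on $\R^N$. But $u,v$ belong to $X_p^s(\Omega)$, so $u-v = 0$ a.e.\ on $\R^N\setminus\Omega$, a set of positive (in fact infinite) Lebesgue measure; hence $c = 0$, contradicting $u \ne v$.

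There is no real obstacle here: the only delicate point is making sure the pointwise inequality for $\phi$ is strict (which only requires $p > 1$), and then translating the a.e.\ equality $U = V$ into $u - v \equiv\text{const}$ via Fubini, after which the Dirichlet condition closes the argument. The proof uses only the structure of $A_p^s$ and the fact that $\Omega$ is a proper bounded subset of $\R^N$; no fractional-specific machinery beyond the definition is needed.
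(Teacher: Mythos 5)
Your proof is correct, but it takes a genuinely different route from the paper's. The paper reduces to an abstract result (\cite[Lemma 6.3]{MR2640827}) stating that strict monotonicity of $A_p^s$ follows once one verifies the sharp bound $A_p^s(u)\,v \le \norm{u}^{p-1}\norm{v}$ together with a characterization of the equality case (namely $\alpha u = \beta v$ for some $\alpha,\beta \ge 0$, not both zero); it then checks these hypotheses by tracking equality in the H\"older inequality for the kernel. Your approach instead returns to first principles: you rewrite $(A_p^s(u)-A_p^s(v))(u-v)$ as a double integral whose integrand is $(\phi(U)-\phi(V))(U-V)$ with $\phi(t)=|t|^{p-2}t$, pointwise nonnegative and vanishing only where $U=V$ by strict monotonicity of $\phi$; the vanishing of the integral then forces $(u-v)(x)-(u-v)(y)=0$ a.e.\ on $\R^{2N}$, whence $u-v$ is constant by Fubini, and the Dirichlet condition kills the constant. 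Each route has its merit: the paper's argument plugs into a general Hilbert/Banach-space framework for $p$-Laplacian-type operators and localizes all work to a sharp Cauchy--Schwarz/H\"older estimate, while yours is self-contained, avoids the somewhat delicate characterization of equality in the two nested H\"older inequalities, and makes transparent exactly where the nonlocal Dirichlet condition $u=v=0$ on $\R^N\setminus\Omega$ is used.
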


\begin{proof}
By Perera et al.\! \cite[Lemma 6.3]{MR2640827}, it suffices to show that
\[
A_p^s(u)\, v \le \norm{u}^{p-1} \norm{v} \quad \forall u, v \in X_p^s(\Omega)
\]
and the equality holds if and only if $\alpha u = \beta v$ for some $\alpha, \beta \ge 0$, not both zero. We have
\[
A_p^s(u)\, v \le \int_{\R^{2N}} \frac{|u(x) - u(y)|^{p-1}\, |v(x) - v(y)|}{|x - y|^{N+sp}}\, dx dy \le \norm{u}^{p-1} \norm{v}
\]
by the H\"{o}lder inequality. Clearly, equality holds throughout if $\alpha u = \beta v$ for some $\alpha, \beta \ge 0$, not both zero. Conversely, if $A_p^s(u)\, v = \norm{u}^{p-1} \norm{v}$, equality holds in both inequalities. The equality in the second inequality gives
\[
\alpha\, |u(x) - u(y)| = \beta\, |v(x) - v(y)| \quad \text{a.e.\! in } \R^{2N}
\]
for some $\alpha, \beta \ge 0$, not both zero, and then the equality in the first inequality gives
\[
\alpha\, (u(x) - u(y)) = \beta\, (v(x) - v(y)) \quad \text{a.e.\! in } \R^{2N}.
\]
Since $u$ and $v$ vanish a.e.\! in $\R^N \setminus \Omega$, it follows that $\alpha u = \beta v$ a.e.\! in $\Omega$.
\end{proof}

\begin{lemma} \label{Lemma 6}
For each $w \in L^p(\Omega)$, the problem
\begin{equation} \label{16}
\left\{\begin{aligned}
(- \Delta)_p^s\, u & = |w|^{p-2}\, w && \text{in } \Omega\\[10pt]
u & = 0 && \text{in } \R^N \setminus \Omega
\end{aligned}\right.
\end{equation}
has a unique weak solution $u \in X_p^s(\Omega)$. Moreover, the map $J : L^p(\Omega) \to X_p^s(\Omega),\, w \mapsto u$ is continuous.
\end{lemma}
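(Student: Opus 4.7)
The plan is to handle existence and uniqueness by a standard variational argument, and then derive the continuity of $J$ by combining an $L^p$-type bound with the $(S)$-property of $A_p^s$ noted just before the lemma.

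First, I would observe that weak solutions of \eqref{16} coincide with critical points of the $C^1$-functional
\[
\Phi(u) = \frac{1}{p}\,\norm{u}^p - \int_\Omega |w|^{p-2}\,w\,u\,dx, \quad u \in X_p^s(\Omega).
\]
By the H\"older inequality and the continuous embedding $X_p^s(\Omega) \hookrightarrow L^p(\Omega)$, $\Phi$ is coercive. Since $X_p^s(\Omega)$ is uniformly convex, $u \mapsto \norm{u}^p$ is strictly convex, hence so is $\Phi$; moreover $\Phi$ is weakly lower semicontinuous. It therefore attains its (necessarily unique) global minimum at some $u \in X_p^s(\Omega)$, which is the unique weak solution of \eqref{16}. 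Uniqueness alternatively follows from the strict monotonicity of $A_p^s$ proved in the previous lemma, since two distinct solutions $u_1, u_2$ would satisfy $(A_p^s(u_1)-A_p^s(u_2))(u_1-u_2) = 0$.

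For continuity, let $w_n \to w$ in $L^p(\Omega)$ and set $u_n = J(w_n)$, $u = J(w)$. Testing the equation for $u_n$ with $u_n$ itself and applying H\"older together with the embedding yields $\norm{u_n}^p \le C\,\pnorm[p]{w_n}^{p-1}\,\norm{u_n}$, so $(u_n)$ is bounded in $X_p^s(\Omega)$. Extract a subsequence with $u_n \wto \bar u$ in $X_p^s(\Omega)$; by the compact embedding, $u_n \to \bar u$ in $L^p(\Omega)$. Testing the equation for $u_n$ with $u_n - \bar u$ gives
\[
A_p^s(u_n)(u_n - \bar u) = \int_\Omega |w_n|^{p-2}\,w_n\,(u_n - \bar u)\,dx,
\]
and the right-hand side tends to zero because $|w_n|^{p-2}\,w_n \to |w|^{p-2}\,w$ in $L^{p'}(\Omega)$ by continuity of the Nemytskii operator, while $u_n - \bar u \to 0$ in $L^p(\Omega)$. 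Invoking the $(S)$-property of $A_p^s$, I conclude that $u_n \to \bar u$ strongly in $X_p^s(\Omega)$, which allows passing to the limit in the weak formulation to identify $\bar u$ as a weak solution of \eqref{16} associated with $w$; uniqueness then forces $\bar u = u$. Since the argument applies to every subsequence, the entire sequence $(u_n)$ converges to $u$, proving continuity of $J$.

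The main obstacle is precisely the nonlinearity of $A_p^s$: weak convergence $u_n \wto \bar u$ is not enough to pass to the limit in the quasilinear term, and we have no linearity-based trick available. The $(S)$-property is what bridges this gap, promoting the vanishing of $A_p^s(u_n)(u_n-\bar u)$ into the strong convergence needed to close the argument.
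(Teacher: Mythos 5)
Your proof is correct and follows essentially the same route as the paper: existence by direct minimization of a coercive, strictly convex functional, uniqueness from the strict monotonicity of $A_p^s$, and continuity via an a priori bound, the compact embedding $X_p^s(\Omega)\hookrightarrow L^p(\Omega)$, and the $(S)$-property. The only (minor, and in fact slightly cleaner) difference is the order in the continuity step: you invoke the $(S)$-property first to get strong convergence and then identify the limit by passing to the limit in the equation, whereas the paper first identifies the weak limit as a solution (by an argument analogous to Proposition~\ref{Proposition 4}) and only then applies the $(S)$-property.
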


\begin{proof}
The existence follows from a standard minimization argument, and the uniqueness is immediate from the strict monotonicity of the operator $A_p^s$. Let $w_j \to w$ in $L^p(\Omega)$ and let $u_j = J(w_j)$, so
\begin{equation} \label{17}
A_p^s(u_j)\, v = \int_\Omega |w_j|^{p-2}\, w_j\, v\, dx \quad \forall v \in X_p^s(\Omega).
\end{equation}
Testing with $v = u_j$ gives
\[
\norm{u_j}^p = \int_\Omega |w_j|^{p-2}\, w_j\, u_j\, dx \le \pnorm[p]{w_j}^{p-1} \pnorm[p]{u_j}
\]
by the H\"{o}lder inequality, which together with the continuity of the imbedding $X_p^s(\Omega) \hookrightarrow L^p(\Omega)$ shows that $\seq{u_j}$ is bounded in $X_p^s(\Omega)$. So a renamed subsequence of $\seq{u_j}$ converges to some $u$ weakly in $X_p^s(\Omega)$, strongly in $L^p(\Omega)$, and a.e.\! in $\Omega$. An argument similar to that in the proof of Proposition \ref{Proposition 4} shows that $u$ is a weak solution of \eqref{16}, so $u = J(w)$. Testing \eqref{17} with $u_j - u$ gives
\[
A_p^s(u_j)\, (u_j - u) = \int_\Omega |w_j|^{p-2}\, w_j\, (u_j - u)\, dx \to 0,
\]
so $u_j \to u$ for a further subsequence as $A_p^s$ is of type (S).
\end{proof}

\begin{proposition} \label{Proposition 7}
If $\lambda_l < \lambda_{l+1}$, then $\Psi^{\lambda_l}$ has a compact symmetric subset $A_0$ with $i(A_0) = l$.
\end{proposition}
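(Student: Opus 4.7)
The plan is to exploit the compact embedding $X_p^s(\Omega) \hookrightarrow L^p(\Omega)$ together with the solution operator $J$ of Lemma \ref{Lemma 6} in order to replace $\Psi^{\lambda_l}$ by its image under a suitable normalization of $J$; by \eqref{4} we already have $i(\Psi^{\lambda_l}) = l$, so the only real obstruction is that $\Psi^{\lambda_l}$ is typically not norm-compact in $X_p^s(\Omega)$.

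First I will note that $\Psi^{\lambda_l} \subset \M$ is bounded in $X_p^s(\Omega)$ and therefore precompact in $L^p(\Omega)$, while staying away from $0$ in $L^p$ because $\pnorm[p]{u}^p \ge 1/\lambda_l$ on $\Psi^{\lambda_l}$. Thus the $L^p$-closure $K$ of $\Psi^{\lambda_l}$ is a compact symmetric subset of $L^p(\Omega) \setminus \set{0}$. The uniqueness in Lemma \ref{Lemma 6} gives $J(\alpha w) = \alpha J(w)$ for every $\alpha \in \R$, so in particular $J$ is odd and vanishes only at $0$, and the map $\widehat{J}\colon L^p(\Omega) \setminus \set{0} \to \M$, $\widehat{J}(w) := J(w)/\norm{J(w)}$, is continuous and odd. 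I will take $A_0 := \widehat{J}(K)$, a compact symmetric subset of $\M$. The composition of the $L^p$-inclusion $\Psi^{\lambda_l} \hookrightarrow K$ with $\widehat{J}$ is a continuous odd map $\Psi^{\lambda_l} \to A_0$, so the monotonicity property \ref{i2} of Proposition \ref{Proposition 2} gives $i(A_0) \ge i(\Psi^{\lambda_l}) = l$.

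The crux is then to verify $A_0 \subset \Psi^{\lambda_l}$; once this is done, monotonicity forces the matching bound $i(A_0) \le l$. I will show the pointwise estimate $\Psi(\widehat{J}(u)) \le \Psi(u)$ for every $u \in \M$. Setting $v = J(u)$ and testing the weak equation $A_p^s(v)\varphi = \int_\Omega |u|^{p-2} u \varphi \, dx$ successively with $\varphi = v$ and with $\varphi = u$ gives, using $\norm{u} = 1$,
\[
\norm{v}^p \le \pnorm[p]{u}^{p-1} \pnorm[p]{v} \qquad \text{and} \qquad \pnorm[p]{u}^p = A_p^s(v) u \le \norm{v}^{p-1},
\]
the first by H\"older and the second by the Cauchy-Schwarz-type bound for $A_p^s$ that already appears in the preceding strict monotonicity lemma. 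Raising the first to the $p$-th power and then feeding in the second yields
\[
\Psi(\widehat{J}(u)) = \frac{\norm{v}^p}{\pnorm[p]{v}^p} \le \frac{\pnorm[p]{u}^{p(p-1)}}{\norm{v}^{p(p-1)}} \le \frac{1}{\pnorm[p]{u}^p} = \Psi(u),
\]
and continuity of $\widehat{J}\colon L^p \to X_p^s$, combined with closedness of $\Psi^{\lambda_l}$ in $\M$, extends this to all of $K$, giving $A_0 \subset \Psi^{\lambda_l}$.

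The main obstacle I anticipate is this pointwise estimate; the rest reduces to an interplay between the compact embedding $X_p^s(\Omega) \hookrightarrow L^p(\Omega)$ and the axioms of the cohomological index collected in Proposition \ref{Proposition 2}.
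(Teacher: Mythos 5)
Your proposal is correct and is in essence the same argument as the paper's: both hinge on the solution operator $J$ from Lemma \ref{Lemma 6}, the compactness of $X_p^s(\Omega)\hookrightarrow L^p(\Omega)$, the two H\"older/duality estimates obtained by testing the equation for $J(w)$ against $J(w)$ and against $w$, and the monotonicity axiom \ref{i2} of the index. The only organizational difference is that the paper first radially projects $\Psi^{\lambda_l}$ onto $\M_p=\{\pnorm[p]{w}=1\}$, where the two test inequalities neatly say $\norm{\pi_p(J(w))}\le\norm w$ and where the projected set is readily seen to be $L^p$-compact, and then projects back to $\M$; you instead pass to the $L^p$-closure $K$ of $\Psi^{\lambda_l}$ and normalize directly in the $X_p^s$-norm, packaging the same two inequalities into the single decreasing estimate $\Psi(\widehat J(u))\le\Psi(u)$ on $\M$ and then extending to $K$ by continuity of $\widehat J$ together with closedness of $\Psi^{\lambda_l}$ in $X_p^s(\Omega)$. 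Both routes are valid; yours avoids the intermediate manifold $\M_p$ at the cost of the small extra step of checking that the estimate propagates from $\Psi^{\lambda_l}$ to its $L^p$-closure, which you handle correctly. One point worth stating explicitly: the $(p-1)$-homogeneity identity $J(\alpha w)=\alpha J(w)$ you use, and the fact that $J(w)=0$ forces $w=0$ (via density of $X_p^s(\Omega)$ in $L^p(\Omega)$), are both true and are exactly what makes $\widehat J$ well defined and odd away from $0$.
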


\begin{proof}
Let
\[
\pi_p(u) = \frac{u}{\pnorm[p]{u}}, \quad u \in X_p^s(\Omega) \setminus \set{0}
\]
be the radial projection onto $\M_p = \bgset{u \in X_p^s(\Omega) : \pnorm[p]{u} = 1}$, and let
\[
A = \pi_p(\Psi^{\lambda_l}) = \bgset{w \in \M_p : \norm{w}^p \le \lambda_l}.
\]
Then $i(A) = i(\Psi^{\lambda_l}) = l$ by \ref{i2} of Proposition \ref{Proposition 2} and \eqref{4}. For $w \in A$, let $u = J(w)$, where $J$ is the map defined in Lemma \ref{Lemma 6}, so
\[
A_p^s(u)\, v = \int_\Omega |w|^{p-2}\, wv\, dx \quad \forall v \in X_p^s(\Omega).
\]
Testing with $v = u, w$ and using the H\"{o}lder inequality gives
\[
\norm{u}^p \le \pnorm[p]{w}^{p-1} \pnorm[p]{u} = \pnorm[p]{u}, \qquad 1 = A_p^s(u)\, w \le \norm{u}^{p-1} \norm{w},
\]
so
\[
\norm{\pi_p(u)} = \frac{\norm{u}}{\pnorm[p]{u}} \le \norm{w}
\]
and hence $\pi_p(u) \in A$. Let $\widetilde{J} = \pi_p \comp J$ and let $\widetilde{A} = \widetilde{J}(A) \subset A$. Since the imbedding $X_p^s(\Omega) \hookrightarrow L^p(\Omega)$ is compact and $\widetilde{J}$ is an odd continuous map from $L^p(\Omega)$ to $X_p^s(\Omega)$, then $\widetilde{A}$ is a compact set and $i(\widetilde{A}) = i(A) = l$. Let
\[
\pi(u) = \frac{u}{\norm{u}}, \quad u \in X_p^s(\Omega) \setminus \set{0}
\]
be the radial projection onto $\M$ and let $A_0 = \pi(\widetilde{A})$. Then $A_0 \subset \Psi^{\lambda_l}$ is compact and $i(A_0) = i(\widetilde{A}) = l$.
\end{proof}

We are now ready to prove Theorem \ref{Theorem 1}.

\begin{proof}[Proof of Theorem \ref{Theorem 1}]
We only give the proof of \ref{Theorem 1.ii}. Proof of \ref{Theorem 1.i} is similar and simpler. By Proposition \ref{Proposition 4}, $I_\lambda$ satisfies the \PS{c} condition for all $c < \frac{s}{N}\, S_{s,p}^{N/sp}$, so we apply Theorem \ref{Theorem 3} with $b = \frac{s}{N}\, S_{s,p}^{N/sp}$. By Proposition \ref{Proposition 7}, $\Psi^{\lambda_{k+m}}$ has a compact symmetric subset $A_0$ with
\[
i(A_0) = k + m.
\]
We take $B_0 = \Psi_{\lambda_{k+1}}$, so that
\[
i(S_1 \setminus B_0) = k
\]
by \eqref{4}. Let $R > r > 0$ and let $A$, $B$ and $X$ be as in Theorem \ref{Theorem 3}. For $u \in B_0$,
\[
I_\lambda(ru) \ge \frac{r^p}{p} \left(1 - \frac{\lambda}{\lambda_{k+1}}\right) - \frac{r^{p_s^\ast}}{p_s^\ast\, S_{s,p}^{p_s^\ast/p}}
\]
by \eqref{7}. Since $\lambda < \lambda_{k+1}$ and $p_s^\ast > p$, it follows that $\inf I_\lambda(B) > 0$ if $r$ is sufficiently small. For $u \in A_0 \subset \Psi^{\lambda_{k+1}}$,
\[
I_\lambda(Ru) \le \frac{R^p}{p} \left(1 - \frac{\lambda}{\lambda_{k+1}}\right) - \frac{R^{p_s^\ast}}{p_s^\ast \vol{\Omega}^{s p_s^\ast/N} \lambda_{k+1}^{p_s^\ast/p}}
\]
by the H\"{o}lder inequality, so there exists $R > r$ such that $I_\lambda \le 0$ on $A$. For $u \in X$,
\begin{align*}
I_\lambda(u) & \le \frac{\lambda_{k+1} - \lambda}{p} \int_\Omega |u|^p\, dx - \frac{1}{p_s^\ast \vol{\Omega}^{s p_s^\ast/N}} \left(\int_\Omega |u|^p\, dx\right)^{p_s^\ast/p}\\[10pt]
& \le \sup_{\rho \ge 0}\, \left[\frac{(\lambda_{k+1} - \lambda)\, \rho}{p} - \frac{\rho^{p_s^\ast/p}}{p_s^\ast \vol{\Omega}^{s p_s^\ast/N}}\right]\\[10pt]
& = \frac{s}{N}\, \vol{\Omega} (\lambda_{k+1} - \lambda)^{N/sp}.
\end{align*}
So
\[
\sup I_\lambda(X) \le \frac{s}{N}\, \vol{\Omega} (\lambda_{k+1} - \lambda)^{N/sp} < \frac{s}{N}\, S_{s,p}^{N/sp}
\]
by \eqref{5}. Theorem \ref{Theorem 3} now gives $m$ distinct pairs of (nontrivial) critical points $\pm\, u^\lambda_j,\, j = 1,\dots,m$ of $I_\lambda$ such that
\begin{equation} \label{4.4}
0 < I_\lambda(u^\lambda_j) \le \frac{s}{N}\, \vol{\Omega} (\lambda_{k+1} - \lambda)^{N/sp} \to 0 \text{ as } \lambda \nearrow \lambda_{k+1}.
\end{equation}
Then
\[
|u^\lambda_j|_{p_s^\ast}^{p_s^\ast} = \frac{N}{s} \left[I_\lambda(u^\lambda_j) - \frac{1}{p}\, I_\lambda'(u^\lambda_j)\, u^\lambda_j\right] = \frac{N}{s}\, I_\lambda(u^\lambda_j) \to 0
\]
and hence $u^\lambda_j \to 0$ in $L^p(\Omega)$ also by the H\"{o}lder inequality, so
\[
\|u^\lambda_j\|^p = p\, I_\lambda(u^\lambda_j) + \lambda\, |u^\lambda_j|_p^p + \frac{p}{p_s^\ast}\, |u^\lambda_j|_{p_s^\ast}^{p_s^\ast} \to 0. \QED
\]
\end{proof}

\def\cdprime{$''$}

\end{document}